\newcommand{\dv}{\operatorname{div}}
\def\bbn{{\boldsymbol{n}}}
\def\bbt{{\boldsymbol{t}}}
\def\bu{\mathbf{u}}
\def\bv{\mathbf{v}}
\def\bw{\mathbf{w}}
\def\bbx{{\boldsymbol{x}}}
\def\R{\mathbb{R}}
\def\bQ{\mathbf{Q}}
\def\bbH{\mathbb H}
\def\bbL{\mathbb L}
\def\bB{\mathbf B}
\def\id{\mathbf{id}}
\def\bcero{\mathbf 0}
\newtheorem{theorem}{Theorem}[section]
\newtheorem{corollary}[theorem]{Corollary}
\newtheorem{definition}{Definition}[section]
\newtheorem{proposition}[theorem]{Proposition}
\numberwithin{equation}{section}
\begin{document}

\title[Conjugate harmonic functions]{Conjugate harmonic functions in 3D with respect to a unitary gradient}
\author{Pablo Pedregal}
\address{E.T.S. Ingenieros Industriales. Universidad de Castilla La
Mancha. Campus de Ciudad Real, Spain} \email{pablo.pedregal@uclm.es}
\subjclass[2020]{49J45, 30C65, 49J21, 35R30}
\keywords{Integral constraint, conformal map, div-curl lemma, inverse problem in conductivity}

\date{} 
\begin{abstract}
We propose to relax the classic Cauchy-Riemann equations for a mapping. We support the interest of such a proposal by looking at one specific situation in 3D, and proving the existence of pairs of harmonic conjugate functions with respect to a unitary gradient as the title of this contribution conveys. We further investigate the relationship between boundary conditions for such pairs, the importance of the unitary constraint, and the eventual link of these ideas to Calderón's problem in 3D.
\end{abstract}
\maketitle
\section{Introduction}
The matrix equation 
\begin{equation}\label{conformal}
\nabla\bu\,\nabla\bu^T=\det\nabla\bu^{2/N}\,\id,
\end{equation}
for a mapping 
$$
\bu(\bbx):\Omega\subset\R^N\to\R^N, \quad \nabla\bu=\begin{pmatrix}\nabla u_1\\\vdots\\\nabla u_N\end{pmatrix}, \quad \bu=(u_1, u_2, \dots, u_N), 
$$ 
is usually known as the Cauchy-Riemann equations for $\bu$, as it is an appropriate generalization of the so well-known conditions for analytic complex functions for the 2D case $N=2$ (see, among many other places, \cite{alhfors}, \cite{astaliwa}, \cite{iwaniec}, \cite{lehto}). $\id$ is the identity matrix of dimension $N$. 
It is also very well-known that the differences between the cases $N=2$ and $N\ge3$ are in some sense dramatic. The first fundamental difference is the celebrated Liouville theorem which ensures that the only such mappings when $N\ge3$ are either constants or (restrictions of) M\"obius transformations (see above references).

The question we would like to raise is whether it would be interesting to relax equation \eqref{conformal} to 
 \begin{equation}\label{nuevaz}
\nabla\bu\,\nabla\bu^T=D(\det\nabla\bu^{2\alpha_i}),
\end{equation}
for a family of exponents $\alpha_i$, $i=1, 2, \dots, N,$ such that 
\begin{equation}\label{exponentes}
\quad \sum_i\alpha_i=1, \alpha_i\ge0,
\end{equation}
and investigate classes of functions that comply with such a condition. We are using the notation $D(\lambda_i)$ to indicate a diagonal matrix with elements $\lambda_i$ in the diagonal. It is not easy at this point to asses if such an endeavor would be worth pursuing, or for which sets of exponents \eqref{exponentes} would be feasible to study \eqref{nuevaz}. To offer some support for it, we would like to explore the particular case  
$$
N=3,\quad \alpha_1=\alpha_2=1/2, \alpha_3=0,
$$
so that equation \eqref{conformal} is replaced by 
\begin{equation}\label{particular}
\nabla\bu\,\nabla\bu^T=\begin{pmatrix}\det\nabla\bu&0&0\\0&\det\nabla\bu&0\\0&0&1\end{pmatrix},
\end{equation}
which corresponds to \eqref{nuevaz} for the indicated set of exponents. 

Let $\Omega\subset\R^3$ be a bounded, connected, Lipschitz domain, and take a unitary gradient
$$
 |\nabla w(\bbx)|^2=1\hbox{ a.e. }\bbx\in\Omega,
$$
for a Lipschitz function $w$. Notice that this condition implies that $w$ is basically a distance function. We use the standard wedge product for 3D vectors, determined through the identity 
$$
\bu\cdot(\bv\wedge\bw)=\det(\bu, \bv, \bw),\quad \bu, \bv, \bw\in\R^3.
$$
\begin{definition}
Two harmonic functions $u$ and $v$ in $\Omega$ are said to be conjugate of each other with respect to $w$, if
$$
\nabla u(\bbx)=\nabla v(\bbx)\wedge\nabla w(\bbx),\quad \nabla v(\bbx)=\nabla w(\bbx)\wedge\nabla u(\bbx),
$$
for a.e. $\bbx\in\Omega$.
\end{definition}
Note that the particular choice $w(x_1, x_2, x_3)=x_3$ takes us back to the classic 2D situation. It is elementary to argue that this definition corresponds precisely to \eqref{particular}. 

Our main result is the existence of non-trivial pairs of harmonic functions in 3D for every unitary gradient. 
\begin{theorem}\label{principal}
Under the conditions indicated for $\Omega$ and $w$, 
there are always non-trivial pairs of harmonic functions $(u, v)$ in $\Omega$ such that
\begin{equation}\label{identidadz}
\nabla u(\bbx)=\nabla v(\bbx)\wedge\nabla w(\bbx),\quad \nabla v(\bbx)=\nabla w(\bbx)\wedge\nabla u(\bbx)
\end{equation}
a. e. $\bbx$ in $\Omega$. 
\end{theorem}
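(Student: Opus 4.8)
The plan is to first strip the system \eqref{identidadz} down to a single scalar problem, and then to produce a non-constant solution of that problem by a variational argument adapted to the eikonal structure $|\nabla w|=1$.

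First I would carry out the algebraic reduction. Dotting each identity in \eqref{identidadz} with $\nabla w$ gives $\nabla u\cdot\nabla w=\nabla v\cdot\nabla w=0$, so any admissible pair automatically has both gradients tangent to the level sets of $w$. Conversely, if $\nabla u=\nabla v\wedge\nabla w$ holds together with $\nabla v\cdot\nabla w=0$, the vector triple product identity
\[
\nabla w\wedge(\nabla v\wedge\nabla w)=\nabla v\,|\nabla w|^2-\nabla w\,(\nabla w\cdot\nabla v)=\nabla v,
\]
where I used $|\nabla w|^2=1$, recovers the second identity. Hence \eqref{identidadz} is equivalent to the pair of conditions $\nabla v\cdot\nabla w=0$ and $\nabla u=\nabla v\wedge\nabla w$. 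Moreover harmonicity is then free of charge: since $\nabla v,\nabla w$ are distributional gradients, $\operatorname{curl}\nabla v=\operatorname{curl}\nabla w=\bcero$, and the identity $\dv(\mathbf a\wedge\mathbf b)=\mathbf b\cdot\operatorname{curl}\mathbf a-\mathbf a\cdot\operatorname{curl}\mathbf b$ yields $\Delta u=\dv(\nabla v\wedge\nabla w)=0$ and, symmetrically, $\Delta v=\dv(\nabla w\wedge\nabla u)=0$. Thus the whole theorem reduces to exhibiting a single non-constant $v$ with $\nabla v\cdot\nabla w=0$ a.e. such that $\nabla v\wedge\nabla w$ is curl-free; $u$ is then its potential, and $|\nabla u|=|\nabla v|$ shows $u$ is non-constant as well, so the pair is non-trivial.

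Next I would set up a variational problem for $v$. Let $\A$ be the weakly closed class of $v\in H^1(\Omega)$ satisfying the linear pointwise constraint $\nabla v\cdot\nabla w=0$ together with a boundary normalization chosen to rule out $v\equiv\mathrm{const}$; because $|\nabla w|=1$ the constraint is nondegenerate, so $\A$ is genuinely infinite-dimensional. On $\A$ I would study the deficit
\[
J(v)=\min_{u\in H^1(\Omega)}\int_\Omega|\nabla u-\nabla v\wedge\nabla w|^2\,d\bbx,
\]
which is the squared $L^2$-norm of the solenoidal part of $\nabla v\wedge\nabla w$; since that field is already divergence-free, $J(v)$ measures exactly the failure of $\nabla v\wedge\nabla w$ to be curl-free, and a non-constant zero of $J$ on $\A$ is precisely the object sought. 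Two features drive the analysis. The first is the null-Lagrangian/div--curl structure of the cross term $\int_\Omega\nabla u\cdot(\nabla v\wedge\nabla w)\,d\bbx=\int_\Omega\det(\nabla u,\nabla v,\nabla w)\,d\bbx$, which depends only on boundary traces: this linearizes the optimality condition in $u$ (forcing $\Delta u=0$, with $\nabla u$ the gradient part of $\nabla v\wedge\nabla w$) and, through the div--curl lemma, makes all the bilinear identities stable under weak $H^1$ convergence. The second is the eikonal structure: the computation $\nabla w\cdot\operatorname{curl}(\nabla v\wedge\nabla w)=-\Delta v$, together with $(\nabla w\cdot\nabla)\nabla w=\tfrac12\nabla|\nabla w|^2=\bcero$, shows that in the $\nabla w$-direction the curl constraint is merely harmonicity, while tangentially it reduces, leaf by leaf on $\{w=t\}$, to the ordinary planar Cauchy--Riemann system (the choice $w=x_3$ recovering the classical picture). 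I would therefore construct an approximating sequence by mollifying $w$, solving the smooth leafwise conjugate-harmonic problem transported along the unit-speed gradient flow of $w$, deriving uniform $H^1$ bounds from $\int|\nabla v|^2$, and passing to the limit with the div--curl lemma to obtain $\nabla u=\nabla v\wedge\nabla w$ and $\nabla v\cdot\nabla w=0$ distributionally.

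I expect the main obstacle to be exactly this passage to the limit under the mere Lipschitz regularity of $w$. Since $\nabla w\in L^\infty$ only, the product $\nabla v\wedge\nabla w$ lives in $L^2$, curl-freeness must be read distributionally, and $\nabla w$ cannot be differentiated. Controlling the gradient flow of $w$ — whose trajectories are the characteristics of the eikonal equation and may fail to foliate $\Omega$ smoothly — and ensuring that the leafwise two-dimensional construction survives the weak limit without collapsing to the trivial solution is where the real work will be concentrated; it is precisely here that the div--curl lemma and the boundary normalization defining $\A$ must be invoked to keep $J$ at the value $0$ while $v$ stays non-constant.
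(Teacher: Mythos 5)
Your algebraic reduction is sound and coincides with the endgame of the paper's argument: dotting with $\nabla w$, the triple product identity under $|\nabla w|^2=1$, and the Piola-type identity $\dv(\nabla v\wedge\nabla w)=0$ do show that the theorem reduces to producing a non-constant $v$ with $\nabla v\cdot\nabla w=0$ a.e. such that $\nabla v\wedge\nabla w$ is a gradient. The gap is that you never produce such a $v$; everything after the reduction is a programme rather than a proof, and you concede as much when you locate the ``real work'' in the passage to the limit. Concretely: (i) mollifying $w$ destroys the eikonal constraint $|\nabla w|^2=1$ on which your own reduction relies; (ii) the level sets of a Lipschitz function with unit gradient (essentially a distance function) need not foliate $\Omega$ by surfaces on which a ``leafwise conjugate-harmonic problem'' is well posed, and even for smooth $w$ a pair that is conjugate-harmonic on each leaf $\{w=t\}$ for the induced two-dimensional structure is not thereby harmonic in $\Omega$, nor a solution of the three-dimensional identity $\nabla u=\nabla v\wedge\nabla w$: the transport along the gradient flow couples the leaves through curvature terms you do not control; (iii) minimizing your deficit $J$ over the pointwise-constrained class $\A$ only helps if you can show that $\inf_\A J=0$ and that the infimum is attained at a non-constant $v$ --- but that is exactly the content of the theorem, and no argument is offered for it.

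The paper avoids all of this by never imposing the pointwise constraint $\nabla v\cdot\nabla w=0$ in advance. It minimizes $\frac12\int_\Omega(|\nabla u|^2+|\nabla v|^2)\,d\bbx$ subject only to the scalar integral constraint $\int_\Omega\det(\nabla u,\nabla v,\nabla w)\,d\bbx=c\neq0$, which also settles non-triviality for free. Existence of a minimizer is obtained by replacing $u_j$ in a minimizing sequence by the harmonic minimizer $U_j$ of the associated scalar problem (harmonic precisely because $\dv(\nabla v_j\wedge\nabla w)=0$), which upgrades the first component to strong $H^1$ convergence and lets the nonlinear constraint pass to the limit. The pointwise identities then drop out of the natural, boundary-condition-free Euler--Lagrange equations: testing them with $u$ and $v$ themselves and using $|\bv\wedge\bw|^2+(\bv\cdot\bw)^2=|\bv|^2\,|\bw|^2$ together with $|\nabla w|^2=1$ converts the vanishing of the augmented functional into
\[
\int_\Omega\left(\tfrac12|\nabla u-\nabla v\wedge\nabla w|^2+\tfrac12(\nabla v\cdot\nabla w)^2\right)d\bbx=0,
\]
which is precisely your two conditions at once. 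If you want to salvage your approach, this is the mechanism you are missing: the pointwise constraints must come out of optimality as a conclusion, not go in as hypotheses of a leaf-by-leaf construction.
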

From \eqref{identidadz} or \eqref{particular}, we immediately see that $\{\nabla u(\bbx), \nabla v(\bbx), \nabla w(\bbx)\}$ is a orthogonal basis of $\R^3$ for almost every $\bbx\in\Omega$, and moreover
$$
|\nabla u(\bbx)|^2=|\nabla v(\bbx)|^2=\det(\nabla u(\bbx), \nabla v(\bbx), \nabla w(\bbx)).
$$
Even more
$$
\nabla w(\bbx)=\frac1{|\nabla u(\bbx)|\,|\nabla v(\bbx)|}\nabla u(\bbx)\wedge\nabla v(\bbx)\hbox{ a.e. }\bbx\in\Omega.
$$
Being $u$ and $v$ harmonic, the product in the denominator must vanish in those places where $w$ is not smooth. 

More in general, we will also show the following. 
\begin{theorem}
Let $\Omega$ and $w$ be as before. 
Let $\gamma(\bbx)$ be a measurable function such that 
$$
0<C\le\gamma(\bbx)\le\frac1C\hbox{ in }\Omega.
$$
There are non-trivial pairs of functions $(u, v)$ in $H^1(\Omega)$ such that
$$
\gamma(\bbx)\nabla u(\bbx)=\nabla v(\bbx)\wedge\nabla w(\bbx),\quad \frac1{\gamma(\bbx)}\nabla v(\bbx)=\nabla w(\bbx)\wedge\nabla u(\bbx)
$$
a. e. $\bbx$ in $\Omega$. 
\end{theorem}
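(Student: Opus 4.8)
The plan is to reduce the system to a pair of decoupled conductivity equations in the spirit of the remark following Theorem~\ref{principal}, and then to produce a nontrivial solution by a direct variational argument resting on the weak continuity of a determinant; this parallels the proof of Theorem~\ref{principal} with the weight $\gamma$ now built into the energy.

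First I would isolate the algebraic content of the two relations. Writing them as $\gamma\nabla u=\nabla v\wedge\nabla w$ and $\nabla v=\gamma\,\nabla w\wedge\nabla u$, and substituting the second into the first, the double-wedge identity
\[
(\nabla w\wedge\nabla u)\wedge\nabla w=|\nabla w|^2\,\nabla u-(\nabla w\cdot\nabla u)\,\nabla w
\]
together with $|\nabla w|^2=1$ forces $\nabla w\cdot\nabla u=0$, and symmetrically $\nabla w\cdot\nabla v=0$; conversely, once this orthogonality holds each relation implies the other. Since $\nabla v$ and $\nabla w$ are curl free one has $\dv(\nabla v\wedge\nabla w)=0$, so the first relation yields $\dv(\gamma\nabla u)=0$, and likewise $\dv(\tfrac1\gamma\nabla v)=0$. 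Thus a weighted conjugate pair is exactly a pair $(u,v)\in H^1(\Omega)$ with $u$ a solution of the conductivity equation for $\gamma$, $v$ a solution of the one for $1/\gamma$, and $\{\nabla u,\nabla v,\nabla w\}$ orthogonal: precisely the Calderón picture anticipated in the introduction.

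Second, I would set up the variational principle suggested by the pointwise inequality
\[
\det(\nabla u,\nabla v,\nabla w)=\nabla u\cdot(\nabla v\wedge\nabla w)\le \tfrac12\,\gamma|\nabla u|^2+\tfrac1{2\gamma}|\nabla v|^2,
\]
which holds because $|\nabla w|=1$ gives $|\nabla v\wedge\nabla w|\le|\nabla v|$ and then by Cauchy--Schwarz, with equality a.e. exactly when $\gamma\nabla u=\nabla v\wedge\nabla w$ and $\nabla v\cdot\nabla w=0$, i.e. exactly on solutions. Accordingly I would maximize
\[
J(u,v)=\int_\Omega \det(\nabla u,\nabla v,\nabla w)\,d\bbx
\]
over the class of $(u,v)\in H^1(\Omega)^2$ with zero mean and $\int_\Omega(\gamma|\nabla u|^2+\tfrac1\gamma|\nabla v|^2)\,d\bbx=1$. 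Integrating the inequality gives $J\le\tfrac12$, and after rescaling a solution of the system is precisely a maximizer attaining the value $\tfrac12$. In the direct method the normalization and $C\le\gamma\le 1/C$ bound the sequence in $H^1$, so $(u_k,v_k)\rightharpoonup(u,v)$ weakly; the weighted energy is weakly lower semicontinuous, while $J$ is weakly continuous because $\det(\nabla u,\nabla v,\nabla w)=(\nabla u\wedge\nabla v)\cdot\nabla w$ with $\nabla w$ fixed in $L^\infty$ and the $2\times2$ minors $\nabla u\wedge\nabla v$ are null Lagrangians controlled by the div--curl lemma. The Euler--Lagrange equations at the maximizer then return $\dv(\gamma\nabla u)=0$ and $\dv(\tfrac1\gamma\nabla v)=0$ with natural boundary conditions coupling the two fields, and the conjugate field $\gamma\,\nabla w\wedge\nabla u$ is curl free, so $v$ is recovered as its potential.

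The hard part will be showing that the maximal value is exactly the sharp constant $\tfrac12$ and is attained at a nontrivial pair, since a maximizer with value in $(0,\tfrac12)$ would solve only the Euler--Lagrange equations and not the pointwise relations. The mechanism I would rely on is the two-dimensional miracle that conjugation preserves Dirichlet energy: on each level set of $w$ the problem is exactly the weighted $2$D Cauchy--Riemann system, for which the defect $\tfrac12\int(\gamma|\nabla u|^2+\tfrac1\gamma|\nabla v|^2)-J$ vanishes identically; transporting this energy identity through the foliation of $\Omega$ by the level sets of $w$ shows that the supremum is $\tfrac12$ and exhibits an explicit nontrivial competitor, which together with the weak continuity of $J$ yields the maximizer. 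The two genuine technical difficulties are the low regularity, $\gamma$ merely measurable and $\nabla w$ only in $L^\infty$ with a singular set where the distance function $w$ fails to be $C^1$, which confines every step to the weak $H^1$ level, and the verification that $\gamma\,\nabla w\wedge\nabla u$ is curl free in the distributional sense so that its potential $v$ genuinely exists in $H^1(\Omega)$; both are ultimately controlled by the null structure $\dv(\nabla v\wedge\nabla w)=\dv(\nabla w\wedge\nabla u)=0$, which requires no smoothness of the data.
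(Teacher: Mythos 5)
Your algebraic reductions are sound: the equality-case analysis of the pointwise inequality $\nabla u\cdot(\nabla v\wedge\nabla w)\le\tfrac{\gamma}2|\nabla u|^2+\tfrac1{2\gamma}|\nabla v|^2$, with defect $\tfrac{\gamma}2|\nabla u-\tfrac1\gamma\nabla v\wedge\nabla w|^2+\tfrac1{2\gamma}(\nabla v\cdot\nabla w)^2$, is exactly the sum-of-squares identity the paper uses in its Step 2, and your derivation of the orthogonality relations and of the two conductivity equations matches the remarks in the paper. But your variational scheme has two genuine gaps, and they sit precisely at the two places where the paper's proof does its real work. First, the claim that $J(u,v)=\int_\Omega\det(\nabla u,\nabla v,\nabla w)\,d\bbx$ is weakly continuous along a bounded sequence in $H^1$ is not justified and is, as stated, the very obstruction the paper points out: the integrand is quadratic, exactly at the coercivity exponent, so the minors $\nabla u_k\wedge\nabla v_k$ are bounded only in $L^1$; the div--curl lemma gives convergence in the sense of distributions, i.e.\ against $C_c^\infty$ test functions, and this does not allow you to pair the limit with the fixed $L^\infty$ field $\nabla w$ over all of $\Omega$ (mass can concentrate or escape to the boundary). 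The paper's remedy is to replace the first component of a minimizing (for you, maximizing) sequence by the solution $U_j$ of an auxiliary scalar problem, so that $\dv(\gamma\nabla U_j)=0$; the div--curl lemma then yields convergence of the energies $\int_\Omega\gamma|\nabla U_j|^2\,d\bbx$, which combined with strict convexity upgrades $\nabla U_j\rightharpoonup\nabla u$ to strong $L^2$ convergence, and only then does the constraint pass to the limit. Without this (or an equivalent compactness device) your direct method does not produce a maximizer of $J$.

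Second, even granting a maximizer, your argument needs the maximal value on the unit energy sphere to be exactly the sharp constant $\tfrac12$; a maximizer with value in $(0,\tfrac12)$ satisfies only Euler--Lagrange equations with a multiplier different from the one that makes the augmented integrand a sum of squares, and then no pointwise conclusion follows. The ``foliation by level sets of $w$'' mechanism you propose for this is not a proof: constructing a pair $(u,v)$ on $\Omega$ whose gradients are tangent to every level set and solve the weighted 2D Cauchy--Riemann system leaf by leaf is essentially the statement of the theorem itself, so the step is circular, and it is in any case inaccessible at the regularity assumed ($w$ merely Lipschitz, $\gamma$ merely measurable). The paper avoids identifying any sharp constant: it fixes the constraint $\int_\Omega\det(\nabla u,\nabla v,\nabla w)\,d\bbx=c\ne0$, normalizes the Lagrange multiplier to $1$ by rescaling, and then tests the two weak Euler--Lagrange equations with $U=u$ and $U=v$ respectively; adding the resulting identities shows that the integral of the augmented (nonnegative, sum-of-squares) integrand vanishes, which forces the pointwise relations. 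A smaller but related gap: your final step recovers $v$ as a potential of $\gamma\,\nabla w\wedge\nabla u$, but $\dv(\nabla w\wedge\nabla u)=0$ says nothing about $\gamma\,\nabla w\wedge\nabla u$ being curl-free; in the paper both components emerge simultaneously from the variational problem and no such integration is needed. I would recommend recasting your argument in the paper's ``minimize energy subject to a fixed nonzero determinant integral'' form, where both difficulties are resolved by the auxiliary problem and the multiplier normalization.
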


Even though the 2D case is something very well-established, we will explore it with some care (Section \ref{dos}) as a preliminary step,  from a variational perspective by looking at a certain vector variational problem to point the path for the generalization to the 3D case (Section \ref{tres}), and for some other situations examined in Sections \ref{cinco} and \ref{seis}. 

More specifically, we will treat the following issues.
\begin{itemize}
\item Relationship between boundary conditions for $u$ and $v$ coming from \eqref{identidadz} (Section \ref{cuatro}). This point will lead us to new types of boundary conditions that have been introduced in \cite{pedregal}. 
\item Changes for a non-unitary gradient (Section \ref{cinco}). Given that the restriction of a gradient being unitary may seem a bit restrictive, we explore what changes are introduced by being dispensed with such a point-wise constraint.
\item Potential relevance for Calderón's problem in 3D (Section \ref{seis}). We believe that these ideas may have some significance for the classic inverse problem in conductivity for the 3D situation. In fact, we conjecture that the fundamental uniqueness result in \cite{astala} for the 2D case will fail for $N=3$.
\end{itemize}

\section{The classical, 2D case}\label{dos}
The results in this section are classical. Our intention is to describe a path that may possibly 
allow the extension to the 3D situation. 

We would like to examine the vector, variational problem
\begin{equation}\label{dosde}
\hbox{Minimize in }(u, v)\in H^1(\Omega; \R^2):\quad \frac12\int_\Omega(|\nabla u(\bbx)|^2+|\nabla v(\bbx)|^2)\,d\bbx
\end{equation}
subject to
\begin{equation}\label{dosdei}
\int_\Omega \det(\nabla u(\bbx), \nabla v(\bbx))\,d\bbx=c(\neq0), \hbox{ a constant}. 
\end{equation}
Recall that
$$
\det(\bu, \bv)=-\bu\cdot\bQ\bv=\bQ\bu\cdot\bv,\quad \bu, \bv\in\R^2, \bQ=\begin{pmatrix}0&-1\\1&0\end{pmatrix}.
$$
We are not especially interested in any particular value of the constant $c$, as long as it is non-vanishing to discard trivial solutions. We claim that there are always, for every non-vanishing constant $c$, minimizers of this problem. Notice how the direct method stumbles with the fact that the $\det$ function is quadratic in 2D, exactly as the coercivity growth exponent given by the quadratic norm. Hence, despite the fact that $\det$ is weakly continuous, one cannot, at first sight, deduce the weak convergence
\begin{equation}\label{det2D}
\det(\nabla u_j, \nabla v_j)\rightharpoonup\det(\nabla u, \nabla v)\hbox{ in }L^1(\Omega)
\end{equation}
only under 
$$
(u_j, v_j)\rightharpoonup(u, v)\hbox{ in }H^1(\Omega; \R^2).
$$
\begin{proposition}
For every non-null constant $c$, there are minimizers $(u_c, v_c)\in H^1(\Omega; \R^2)$ for the variational problem \eqref{dosde}-\eqref{dosdei}. 
\end{proposition}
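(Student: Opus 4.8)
The plan is to bypass the direct-method obstacle flagged above entirely: rather than trying to pass to the limit in the constraint along a minimizing sequence, I would obtain a \emph{sharp} lower bound for the energy on the admissible set by a purely algebraic (completing-the-square) argument, and then exhibit an explicit pair that attains it. Since both the energy and the constraint are quadratic, this turns an existence-by-compactness question into the construction of an exact, global minimizer.

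The key computation rests on the identity $\det(\nabla u,\nabla v)=\bQ\nabla u\cdot\nabla v$ together with $|\bQ\nabla v|=|\nabla v|$ and $\bQ^2=-\id$. These give, a.e. in $\Omega$,
\[
\tfrac12\,|\nabla u\pm\bQ\nabla v|^2=\tfrac12\bigl(|\nabla u(\bbx)|^2+|\nabla v(\bbx)|^2\bigr)\mp\det(\nabla u(\bbx),\nabla v(\bbx))\ge0.
\]
Integrating and using the constraint \eqref{dosdei}, the two choices of sign yield $E(u,v)\ge c$ and $E(u,v)\ge -c$, where $E$ denotes the energy in \eqref{dosde}; hence $E(u,v)\ge|c|$ for \emph{every} admissible pair. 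Moreover equality forces, say for $c>0$, the relation $\nabla u=-\bQ\nabla v$ a.e., equivalently $\nabla v=\bQ\nabla u$, which is exactly the classical Cauchy--Riemann system for $f=u+iv$; the case $c<0$ gives the anti-holomorphic system. So the infimum equals $|c|$ and is attained precisely at (anti-)conformal pairs of prescribed signed area $c$.

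It then remains to produce one admissible minimizer for each sign of $c$, which is elementary: for $c>0$ take the affine pair $u(\bbx)=\kappa x_1$, $v(\bbx)=\kappa x_2$ with $\kappa=\sqrt{c/|\Omega|}$, so that $\det(\nabla u,\nabla v)\equiv\kappa^2$, the constraint \eqref{dosdei} holds, and $E(u,v)=\kappa^2|\Omega|=c=|c|$; for $c<0$ replace $v$ by $-\kappa x_2$ with $\kappa=\sqrt{|c|/|\Omega|}$. These realize the lower bound $|c|$ and are therefore minimizers $(u_c,v_c)$. The \emph{main obstacle} is conceptual rather than technical: as the text stresses, along a weakly convergent minimizing sequence one only controls $\det(\nabla u_j,\nabla v_j)\rightharpoonup\det(\nabla u,\nabla v)$ in the sense of distributions (via the div-curl lemma, since $\dv(\bQ\nabla u_j)=0$ and $\nabla v_j$ is curl-free), and this does \emph{not} license testing against the constant $1$, i.e.\ passing to the limit in $\int_\Omega\det$. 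The completing-the-square inequality sidesteps this by trading compactness for an exact lower bound, and it is precisely this algebraic mechanism --- with $\det(\nabla u,\nabla v,\nabla w)=\nabla u\cdot(\nabla v\wedge\nabla w)\le|\nabla u|\,|\nabla v|\le\tfrac12(|\nabla u|^2+|\nabla v|^2)$ when $|\nabla w|=1$ --- that I would expect to drive the 3D generalization in Section \ref{tres}.
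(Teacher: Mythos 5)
Your proof is correct, but it takes a genuinely different route from the paper's. The paper runs the direct method and repairs the failure of weak continuity of the constraint by replacing the first component $u_j$ of a minimizing sequence with the solution $U_j$ of an auxiliary scalar problem; optimality forces $U_j$ to be harmonic (because $\dv(\bQ\nabla v_j)=0$), harmonicity upgrades weak to strong convergence of $\nabla U_j$, and the constraint then passes to the limit. You instead exploit the pointwise identity $\tfrac12|\nabla u\pm\bQ\nabla v|^2=\tfrac12(|\nabla u|^2+|\nabla v|^2)\mp\det(\nabla u,\nabla v)$, which yields the sharp lower bound $E(u,v)\ge|c|$ on the admissible set, and you attain it with an explicit affine pair; both the algebra and the verification of the constraint for $u=\kappa x_1$, $v=\pm\kappa x_2$ check out. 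Your argument is shorter, entirely elementary, identifies the value of the infimum, and shows as a bonus that \emph{every} minimizer satisfies the Cauchy--Riemann system pointwise, which renders the paper's subsequent proposition immediate in 2D. What the paper's longer route buys is exactly what your closing remark glosses over: in 3D the completing-the-square inequality still gives $E\ge|c|$ when $|\nabla w|=1$, but attainment would require exhibiting a pair with $\nabla u=\nabla v\wedge\nabla w$ for an \emph{arbitrary} unitary gradient $w$ --- which is the main theorem itself, so the explicit-minimizer step does not transfer and the argument would become circular there. The auxiliary-problem/harmonic-replacement mechanism is designed precisely to survive that generalization (via the Piola identity $\dv(\nabla v_j\wedge\nabla w)=0$), whereas yours, as a proof of the 2D proposition, is complete but terminal.
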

\begin{proof}
Let $(u_j, v_j)$ be a minimizing sequence. It is clear that we can assume that it is a uniformly bounded sequence in $H^1(\Omega; \R^2)$ by normalizing their averages over $\Omega$, and use the classic Poincaré-Wirtinger inequality to control the norm of functions. We have not done so explicitly in our variational problem for the sake of not introducing purely circumstantial data. As indicated above, there is apparently no way to ensure the weak convergence in \eqref{det2D} from a uniform bound in $H^1(\Omega; \R^2)$. 

For each $j$, consider the (scalar) auxiliary variational principle
$$
\hbox{Minimize in }u\in H^1(\Omega): \quad \int_\Omega\frac12|\nabla u(\bbx)|^2\,d\bbx
$$
subject to
$$
\int_\Omega \det(\nabla u(\bbx), \nabla v_j(\bbx))\,d\bbx=c.
$$
This time it is straightforward to conclude through the direct method that there is a unique (except for an additive constant that is normalize through the Poincaré-Wirtinger inequality as remarked earlier) minimizer $U_j\in H^1(\Omega)$. Since $u_j$ is feasible for this auxiliary problem, we will have 
$$
\frac12\int_\Omega|\nabla u_j(\bbx)|^2\,d\bbx\ge\frac12\int_\Omega|\nabla U_j(\bbx)|^2\,d\bbx,
$$
and hence $(U_j, v_j)$ will also be minimizing for our initial vector problem. 

On the other hand, by the optimality conditions that $U_j$ should comply with, we conclude that each $U_j$ must be harmonic in $\Omega$. Indeed, if we introduce a multiplier $\lambda$ to deal with the integral constraint, we would have that
$$
\dv(\nabla U_j+\lambda\bQ\nabla v_j)=0\hbox{ in }\Omega.
$$
The divergence of the second term identically vanishes being a rotated gradient in 2D. Thus we conclude the existence of a uniformly bounded (in $H^1(\Omega; \R^2)$), minimizing sequence  $(U_j, v_j)$ for our original vector problem such that each $U_j$ is harmonic in $\Omega$. If $(u, v)$ is a weak limit for some subsequence, it is a classic property that 
$$
U_j\to u, v_j\rightharpoonup v\hbox{ in }H^1(\Omega),
$$
the first strong convergence being a consequence of harmonicity. 
It is then clear that 
$$
\int_\Omega\det(\nabla U_j(\bbx), \nabla v_j(\bbx))\,d\bbx\to\int_\Omega\det(\nabla u(\bbx), \nabla v(\bbx))\,d\bbx,
$$
and $(u, v)$ becomes a true minimizer for our problem. 
\end{proof}

Our next step is to explore optimality conditions for these minimizers to conclude the following. 
\begin{proposition}
There are non-trivial pairs $(u, v)$ of harmonic functions in $\Omega$ such that
\begin{equation}\label{C-R}
\nabla u(\bbx)+\bQ\nabla v(\bbx)=\bcero\hbox{ in }\Omega.
\end{equation}
\end{proposition}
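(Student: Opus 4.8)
The plan is to read the Cauchy--Riemann system \eqref{C-R} off the \emph{equality case} of a sharp pointwise inequality between the integrand of \eqref{dosde} and the constraint density in \eqref{dosdei}, rather than from the Euler--Lagrange equations. The reason for this detour is also the main obstacle: the constraint functional $(u,v)\mapsto\int_\Omega\det(\nabla u,\nabla v)$ is a null Lagrangian, so its first variation is a pure divergence. Introducing a multiplier $\lambda$ and imposing stationarity gives only
$$
\dv(\nabla u+\lambda\bQ\nabla v)=0,\quad \dv(\nabla v-\lambda\bQ\nabla u)=0\hbox{ in }\Omega,
$$
and since $\dv(\bQ\nabla v)=\dv(\bQ\nabla u)=0$ automatically in 2D, these collapse to $\Delta u=\Delta v=0$. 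Thus the optimality conditions detect harmonicity but carry \emph{no} pointwise information tying $\nabla u$ to $\nabla v$; the relation \eqref{C-R} cannot come from stationarity alone.

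First I would record the elementary chain of pointwise inequalities, valid a.e.\ for any $(u,v)\in H^1(\Omega;\R^2)$: using that $\bQ$ is an isometry with $\bQ^T=-\bQ$ and $\bQ^2=-\id$,
$$
\det(\nabla u,\nabla v)=\bQ\nabla u\cdot\nabla v\le|\bQ\nabla u|\,|\nabla v|=|\nabla u|\,|\nabla v|\le\tfrac12\big(|\nabla u|^2+|\nabla v|^2\big),
$$
and similarly $|\det(\nabla u,\nabla v)|\le\tfrac12(|\nabla u|^2+|\nabla v|^2)$. Integrating, the objective in \eqref{dosde} is bounded below by $\int_\Omega|\det(\nabla u,\nabla v)|\ge\big|\int_\Omega\det(\nabla u,\nabla v)\big|=|c|$ on the feasible set \eqref{dosdei}. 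I would then exhibit a competitor attaining this bound: for any non-constant holomorphic $f=u_0+iv_0$ the pair $(u_0,v_0)$ turns every inequality above into an equality, and after rescaling to normalize $\int_\Omega\det(\nabla u_0,\nabla v_0)=c$ it is feasible with objective exactly $|c|$. Hence the minimum value of \eqref{dosde}--\eqref{dosdei} equals $|c|$, and the minimizer $(u_c,v_c)$ produced in the previous Proposition realizes it.

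It then remains to unwind the equality cases at $(u_c,v_c)$. Assuming $c>0$ (the case $c<0$ only flips a sign), the identity $\tfrac12\int_\Omega(|\nabla u_c|^2+|\nabla v_c|^2)=\int_\Omega\det(\nabla u_c,\nabla v_c)=c$ forces, a.e.\ in $\Omega$: (i) $|\nabla u_c|=|\nabla v_c|$, from equality in the arithmetic--geometric step; (ii) $\bQ\nabla u_c$ parallel to $\nabla v_c$, from equality in Cauchy--Schwarz; and (iii) $\det(\nabla u_c,\nabla v_c)\ge0$, from $\int_\Omega|\det|=\int_\Omega\det$. Since $\bQ$ is an isometry, (i)--(ii) give $\bQ\nabla u_c=\pm\nabla v_c$, and (iii) selects $\bQ\nabla u_c=\nabla v_c$ (the minus sign would give $\det=-|\nabla v_c|^2\le0$); applying $\bQ$ and using $\bQ^2=-\id$ this is exactly $\nabla u_c+\bQ\nabla v_c=\bcero$, i.e.\ \eqref{C-R}. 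Finally, harmonicity is then automatic, since $\Delta u_c=-\dv(\bQ\nabla v_c)=0$ and $\Delta v_c=\dv(\bQ\nabla u_c)=0$, and the pair is non-trivial because $c\ne0$ precludes $\nabla u_c$ and $\nabla v_c$ vanishing identically. The value of this argument for Section \ref{tres} is precisely that it bypasses the degenerate optimality system and recasts the pointwise system as a rigidity statement extracted from a sharp quadratic--determinant inequality, a structure one may hope survives in the 3D setting.
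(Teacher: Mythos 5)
Your proof is correct, but it takes a genuinely different route from the paper's. The paper works with the stationarity conditions of the augmented functional $\int_\Omega\bigl(\tfrac12|\nabla u|^2+\tfrac12|\nabla v|^2+\nabla u\cdot\bQ\nabla v\bigr)\,d\bbx$ with free boundary values: testing the resulting weak identities against $U=u$ and $U=v$ and adding yields $\int_\Omega\tfrac12|\nabla u+\bQ\nabla v|^2\,d\bbx=0$. (Your opening claim that \eqref{C-R} ``cannot come from stationarity alone'' is therefore too strong: the \emph{interior} Euler--Lagrange equations do collapse to harmonicity, but the full weak stationarity over all of $H^1(\Omega)$ --- natural boundary conditions included --- does produce the pointwise relation once it is tested against the minimizer itself.) What the paper's route needs, and justifies only by an informal scaling remark, is that the multiplier in front of the determinant term equals one; your computation of the exact minimal value $|c|$ --- lower bound from the pointwise inequality $\det(\nabla u,\nabla v)\le\tfrac12(|\nabla u|^2+|\nabla v|^2)$, upper bound from a rescaled holomorphic competitor --- supplies precisely that missing normalization, and then extracts \eqref{C-R} from the a.e.\ equality case. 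On that point your argument is cleaner and more self-contained. The trade-off is generalizability, which matters since Section \ref{dos} is explicitly a template for Section \ref{tres}: the lower bound survives in 3D (because $|\nabla v\wedge\nabla w|\le|\nabla v|$ when $|\nabla w|=1$), but the attainment step requires exhibiting a competitor that already satisfies the conjugacy system, which in 3D is exactly the existence statement to be proved; so the rigidity structure you hope ``survives'' is where the argument would turn circular, and the paper's Euler--Lagrange route is the one that carries over.
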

\begin{proof}
Let $(u_c, v_c)$ be a minimizer pair for each non-vanishing constant $c$, as a result of the previous proposition. 
Since we are not particularly interested in the specific value of the constant $c$ (as long as it does not vanish) as remarked earlier, and the family of minimizers $(u_c, v_c)$ still is so for a positive multiple of the $L^2$-norm of the gradient, we should have that $(u, v)(\equiv(u_c, v_c))$ must comply with optimality conditions for the functional
$$
\int_\Omega\left(\frac12|\nabla u(\bbx)|^2+\frac12|\nabla v(\bbx)|^2+\nabla u(\bbx)\cdot\bQ\nabla v(\bbx)\right)\,d\bbx, 
$$
under no boundary condition around $\partial\Omega$. Therefore, it holds that
\begin{gather}
\dv(\nabla u+\bQ\nabla v)=0\hbox{ in }\Omega,\quad (\nabla u+\bQ\nabla v)\cdot\bbn=0\hbox{ on }\partial\Omega,\nonumber\\
\dv(\nabla v-\bQ\nabla u)=0\hbox{ in }\Omega,\quad (\nabla v-\bQ\nabla u)\cdot\bbn=0\hbox{ on }\partial\Omega,\nonumber
\end{gather}
where $\bbn$ is the outer, unit normal to $\partial\Omega$. Of course, the differential equations in $\Omega$ imply that both $u$ and $v$ are harmonic in $\Omega$. But because of the natural boundary conditions on the right-hand sides,
$$
\int_\Omega(\nabla u(\bbx)+\bQ\nabla v(\bbx))\cdot\nabla U(\bbx)\,d\bbx=\int_\Omega(\nabla v(\bbx)-\bQ\nabla u(\bbx))\cdot\nabla U(\bbx)\,d\bbx=0
$$
for every $U\in H^1(\Omega)$. Or, alternatively, one performs variations in the augmented functional above with arbitrary functions $U$ in $H^1(\Omega)$ for both variables $u$ and $v$. In particular, 
\begin{gather}
\int_\Omega(\nabla u(\bbx)+\bQ\nabla v(\bbx))\cdot\nabla u(\bbx)\,d\bbx=0,\nonumber\\
\int_\Omega(\nabla v(\bbx)-\bQ\nabla u(\bbx))\cdot\nabla v(\bbx)\,d\bbx=0,\nonumber
\end{gather}
and, after some elementary algebra, we arrive at
$$
\int_\Omega \frac12|\nabla u(\bbx)+\bQ\nabla v(\bbx)|^2\,d\bbx=0.
$$
\end{proof}
Note that it would be hard to achieve the existence of non-trivial pairs $(u, v)$ of solutions to \eqref{C-R} by working directly with the functional
$$
\int_\Omega |\nabla u(\bbx)+\bQ\nabla v(\bbx)|^2\,d\bbx.
$$

\section{The 3D case}\label{tres}
Suppose $\Omega\subset\R^3$ is a model domain (like a ball, a cube, or a cylinder), and $w(\bbx)$ is a given Lipschitz function with unitary gradient
\begin{equation}\label{unidad}
|\nabla w(\bbx)|^2=1\hbox{ a.e. }\bbx\in\Omega. 
\end{equation}

We would like to investigate the vector variational problem
\begin{equation}\label{tresdez}
\hbox{Minimize in }(u, v)\in H^1(\Omega; \R^2):\quad \frac12\int_\Omega(|\nabla u(\bbx)|^2+|\nabla v(\bbx)|^2)\,d\bbx
\end{equation}
under the integral constraint
\begin{equation}\label{tresdeiz}
\int_\Omega \nabla u(\bbx)\cdot(\nabla v(\bbx)\wedge\nabla w(\bbx))\,d\bbx=c(\neq0), \hbox{ a constant}.
\end{equation}
Note that
$$
\det(\nabla u, \nabla v, \nabla w)=\nabla u\cdot(\nabla v\wedge\nabla w).
$$
We restate our main result.
\begin{theorem}\label{teor3d}
Let the function $w(\bbx)$ be given with the indicated properties above. There are non-trivial pairs of harmonic functions $(u, v)$ in $\Omega$ such that
\begin{equation}\label{identidad}
\nabla u(\bbx)=\nabla v(\bbx)\wedge\nabla w(\bbx),\quad \nabla v(\bbx)=\nabla w(\bbx)\wedge\nabla u(\bbx)
\end{equation}
a. e. $\bbx$ in $\Omega$. 
\end{theorem}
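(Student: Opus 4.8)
The plan is to follow, almost line by line, the two-stage strategy of Section~\ref{dos}, with the planar rotation $\bQ$ systematically replaced by the operation $\cdot\wedge\nabla w$. First I would produce, for every $c\neq0$, a minimizer of the constrained problem \eqref{tresdez}--\eqref{tresdeiz}; then I would squeeze the pointwise identities \eqref{identidad} out of its optimality conditions, with the unitary constraint \eqref{unidad} intervening decisively only at the very last moment.

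For existence I would reproduce the auxiliary-problem device. Starting from a minimizing sequence $(u_j,v_j)$, normalized and bounded in $H^1(\Omega;\R^2)$ through Poincaré-Wirtinger, I freeze $v_j$ and minimize $\frac12\int_\Omega|\nabla u|^2$ over $u\in H^1(\Omega)$ under the constraint $\int_\Omega\nabla u\cdot(\nabla v_j\wedge\nabla w)=c$. Since this is now \emph{linear} in $\nabla u$, the direct method applies at once and yields a minimizer $U_j$ solving $\dv(\nabla U_j+\lambda_j\,\nabla v_j\wedge\nabla w)=0$ in $\Omega$. The decisive algebraic fact, replacing ``the divergence of a rotated gradient vanishes in 2D'', is that $\dv(\nabla v_j\wedge\nabla w)=\nabla w\cdot(\nabla\times\nabla v_j)-\nabla v_j\cdot(\nabla\times\nabla w)=0$, both being curls of gradients; hence each $U_j$ is harmonic. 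As in the planar case $(U_j,v_j)$ remains minimizing, and harmonicity promotes the weak limit $U_j\rightharpoonup u$ to a strong limit in $H^1(\Omega)$. Writing the constraint as the $L^2$ pairing of the strongly convergent $\nabla U_j$ with the weakly convergent $\nabla v_j\wedge\nabla w$ (the map $\cdot\wedge\nabla w$ is bounded and linear, $\nabla w\in L^\infty$ being fixed), I may pass to the limit and certify that $(u,v)$ is a genuine minimizer.

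Next I would extract the optimality conditions. Absorbing the Lagrange multiplier exactly as in the second proposition of Section~\ref{dos}, the minimizer satisfies
$$
\dv(\nabla u-\nabla v\wedge\nabla w)=0,\quad \dv(\nabla v-\nabla w\wedge\nabla u)=0\hbox{ in }\Omega,
$$
with vanishing normal components $(\nabla u-\nabla v\wedge\nabla w)\cdot\bbn=(\nabla v-\nabla w\wedge\nabla u)\cdot\bbn=0$ on $\partial\Omega$; the same divergence identity makes both $u$ and $v$ harmonic. Testing the two weak equations against $u$ and $v$ and using the cyclic symmetry $\nabla u\cdot(\nabla v\wedge\nabla w)=\nabla v\cdot(\nabla w\wedge\nabla u)=\det(\nabla u,\nabla v,\nabla w)$ gives $\int_\Omega|\nabla u|^2=\int_\Omega|\nabla v|^2=\int_\Omega\det(\nabla u,\nabla v,\nabla w)$. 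Here the unitary constraint enters: since $|\nabla v\wedge\nabla w|^2=|\nabla v|^2-(\nabla v\cdot\nabla w)^2$ by \eqref{unidad}, expanding the square yields
$$
\int_\Omega|\nabla u-\nabla v\wedge\nabla w|^2=-\int_\Omega(\nabla v\cdot\nabla w)^2,
$$
and symmetrically for the other pair. As each left-hand side is nonnegative while each right-hand side is nonpositive, both vanish, which is precisely \eqref{identidad} together with the bonus orthogonalities $\nabla u\cdot\nabla w=\nabla v\cdot\nabla w=0$; non-triviality follows from $\int_\Omega|\nabla u|^2=c\neq0$.

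I expect the genuine obstacle to be the passage to the limit in the constraint, exactly the quadratic weak-continuity difficulty already isolated in \eqref{det2D}: the bilinear functional $\int_\Omega\nabla u\cdot(\nabla v\wedge\nabla w)$ is not weakly continuous under mere $H^1$ bounds, and only the auxiliary harmonic replacement, resting on $\dv(\nabla v\wedge\nabla w)=0$, rescues it. A second point demanding care, invisible in the formal 2D computation, is the normalization of the multiplier to $1$: this amounts to showing that the minimal energy equals $\int_\Omega\det(\nabla u,\nabla v,\nabla w)$, i.e. that the elementary bound $\frac12\int_\Omega(|\nabla u|^2+|\nabla v|^2)\ge\int_\Omega\det(\nabla u,\nabla v,\nabla w)$ is attained, and it is in closing precisely this gap that harmonicity and \eqref{unidad} must be combined most attentively.
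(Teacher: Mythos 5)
Your proposal is correct and follows essentially the same route as the paper: the same constrained problem \eqref{tresdez}--\eqref{tresdeiz}, the same auxiliary-problem device producing harmonic replacements $U_j$ via $\dv(\nabla v_j\wedge\nabla w)=0$, and the same completion of squares in the optimality conditions using \eqref{unidad}. The only (harmless) deviation is at the very end, where you obtain $\nabla v=\nabla w\wedge\nabla u$ from the symmetric completed square rather than, as the paper does, from the first identity combined with $\nabla v\cdot\nabla w=0$ and the triple-product formula \eqref{igualdady}.
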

The proof of our theorem follows along the lines of the 2D case in Section \ref{dos}. At this point, the proof carefully treads over that path. We distinguish two steps.

Step 1. Existence of minimizers for \eqref{tresdez}-\eqref{tresdeiz} (and a normalizing condition on the average of functions over $\Omega$ which we omit). The proof is exactly the same as with the 2D case. If $(u_j, v_j)$ is a minimizing sequence converging weakly in $H^1(\Omega; \R^2)$ to $(u, v)$, we focus on the scalar variational problem
$$
\hbox{Minimize in }u\in H^1(\Omega):\quad \frac12\int_\Omega|\nabla u(\bbx)|^2\,d\bbx
$$
under
$$
\int_\Omega \nabla u(\bbx)\cdot(\nabla v_j(\bbx)\wedge\nabla w(\bbx))\,d\bbx=c.
$$
As before, there is a minimizer $U_j$ which must be harmonic because, by the classical Piola identity, 
$$
\dv(\nabla v_j\wedge\nabla w)=0\hbox{ in }\Omega.
$$
Since $u_j$ is feasible for this last problem, it turns out that $(U_j, v_j)$ is also minimizing for the initial variational problem, and hence uniformly bounded in $H^1(\Omega; \R^2)$. If the pair $(u, v)$ is a weak limit of some suitable subsequence, since each $U_j$ is harmonic, then
\begin{equation}\label{convergenciafd}
U_j\to u, v_j\rightharpoonup v\hbox{ in }H^1(\Omega),
\end{equation}
 $(u, v)$ becomes feasible because
 $$
 \int_\Omega \nabla U_j(\bbx)\cdot(\nabla v_j(\bbx)\wedge\nabla w(\bbx))\,d\bbx\to
 \int_\Omega \nabla u(\bbx)\cdot(\nabla v(\bbx)\wedge\nabla w(\bbx))\,d\bbx
 $$
 under \eqref{convergenciafd} and the uniform convergence of the gradient of $w$, and hence a minimizer for our problem.
 
 Step 2. We explore optimality. Arguing as in the 2D case, we can conclude that the minimizer found in Step 1, ought to comply with optimality conditions for the augmented functional
 $$
 \int_\Omega\left(\frac12|\nabla u(\bbx)|^2+|\nabla v(\bbx)|^2-\nabla u(\bbx)\cdot(\nabla v(\bbx)\wedge\nabla w(\bbx)\right)\,d\bbx,
 $$
under no boundary restriction around $\partial\Omega$ whatsoever. Therefore, we must have
\begin{gather}
\int_\Omega\left(\nabla u(\bbx)-\nabla v(\bbx)\wedge\nabla w(\bbx)\right)\cdot\nabla U(\bbx)\,d\bbx=0,\nonumber\\
\int_\Omega\left(\nabla v(\bbx)-\nabla w(\bbx)\wedge\nabla u(\bbx)\right)\cdot\nabla U(\bbx)\,d\bbx=0,\nonumber
\end{gather}
for every $U\in H^1(\Omega)$. In particular
\begin{gather}
\int_\Omega\left(\nabla u(\bbx)-\nabla v(\bbx)\wedge\nabla w(\bbx)\right)\cdot\nabla u(\bbx)\,d\bbx=0,\nonumber\\
\int_\Omega\left(\nabla v(\bbx)-\nabla w(\bbx)\wedge\nabla u(\bbx)\right)\cdot\nabla v(\bbx)\,d\bbx=0.\nonumber
\end{gather}
From these two identities, we conclude that
$$
\int_\Omega\left(\frac12|\nabla u(\bbx)|^2+|\nabla v(\bbx)|^2-\nabla u(\bbx)\cdot(\nabla v(\bbx)\wedge\nabla w(\bbx)\right)\,d\bbx=0.
$$
Taking advantage of the fact that $\nabla w$ is a unitary gradient, we can rewrite the previous equality in the form
\begin{equation}\label{identidad}
\int_\Omega\left(\frac12|\nabla u(\bbx)-\nabla v(\bbx)\wedge\nabla w(\bbx)|^2+\frac12(\nabla v(\bbx)\cdot\nabla w(\bbx))^2\right)\,d\bbx=0.
\end{equation}
Recall that
$$
|\bv\wedge\bw|^2+(\bv\cdot\bw)^2=|\bv|^2\,|\bw|^2,
$$
for three dimensional vectors $\bv, \bw\in\R^3$. From \eqref{identidad}, we conclude that
\begin{equation}\label{consecuencia}
\nabla u(\bbx)=\nabla v(\bbx)\wedge\nabla w(\bbx),\quad \nabla v(\bbx)\cdot\nabla w(\bbx)=0
\end{equation}
for a.e. $\bbx\in\Omega$. Based on \eqref{consecuencia}, we compute
$$
\nabla w(\bbx)\wedge\nabla u(\bbx)=\nabla w(\bbx)\wedge(\nabla v(\bbx)\wedge\nabla w(\bbx)),
$$
and taking into account the formula
\begin{equation}\label{igualdady}
\bw\wedge(\bv\wedge\bu)=\bw\cdot\bu\,\bv-\bv\cdot\bu\,\bw
\end{equation}
again valid for arbitrary vectors $\bu, \bv, \bw\in\R^3$, we arrive at
$$
\nabla v(\bbx)=\nabla w(\bbx)\wedge\nabla u(\bbx) \hbox{ for a.e. }\bbx\in\Omega, 
$$
thanks to the second part of \eqref{consecuencia}, and \eqref{unidad}.

\section{Boundary conditions and uniqueness of representation}\label{cuatro}
Just as in the 2D case, where the central vector equation
$$
\nabla u(\bbx)+\bQ\nabla v(\bbx)=\bcero
$$
expressing the intimate link between $u$ and $v$, can be exploited to find one of the two from the other, one can try the same strategy in the 3D case.

\begin{proposition}\label{una-otra}
Let $\Omega\subset\R^3$ be as before, and let $v(\bbx)$ be harmonic in $\Omega$. Then the unique solution $u$ of the scalar variational problem
$$
\hbox{Minimize in }u\in H^1(\Omega):\quad \int_\Omega\left(\frac12|\nabla u(\bbx)|^2+\nabla u(\bbx)\cdot(\nabla v(\bbx)\wedge\nabla w(\bbx))\right)\,d\bbx
$$
is the conjugate harmonic function to $v$ with respect to $w$.
\end{proposition}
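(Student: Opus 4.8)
The plan is to mimic the two-step variational strategy of Theorem \ref{teor3d}, but now with $v$ frozen so that only the scalar unknown $u$ is being optimized. First I would check that the functional
$$
I(u)=\int_\Omega\left(\frac12|\nabla u|^2+\nabla u\cdot(\nabla v\wedge\nabla w)\right)\,d\bbx
$$
admits a unique minimizer in $H^1(\Omega)$ modulo additive constants: the quadratic term makes $I$ strictly convex and, since $\nabla w\in L^\infty(\Omega)$ and $\nabla v\in L^2(\Omega;\R^3)$, the linear term is dominated by Young's inequality, so $I$ is coercive once averages are normalized through Poincaré--Wirtinger. The direct method then yields the unique minimizer $u$. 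Writing its optimality condition, $\int_\Omega(\nabla u+\nabla v\wedge\nabla w)\cdot\nabla U\,d\bbx=0$ for all $U\in H^1(\Omega)$, and invoking the Piola identity $\dv(\nabla v\wedge\nabla w)=0$, I immediately obtain that $u$ is harmonic and that $\nabla u+\nabla v\wedge\nabla w$ is divergence-free with vanishing normal trace on $\partial\Omega$.

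The key algebraic step is to complete the square, exactly as was implicitly done for Theorem \ref{teor3d}. I would rewrite
$$
I(u)=\int_\Omega\frac12|\nabla u+\nabla v\wedge\nabla w|^2\,d\bbx-\int_\Omega\frac12|\nabla v\wedge\nabla w|^2\,d\bbx,
$$
in which the last integral does not depend on $u$. Minimizing $I$ is therefore identical to minimizing the $L^2$-distance from the fixed field $-\nabla v\wedge\nabla w$ to the subspace of gradients $\nabla H^1(\Omega)$; the minimizer $u$ is characterized by $\nabla u$ being the $L^2$-orthogonal projection of that field onto gradients. Consequently, the minimizer satisfies the conjugacy relation $\nabla u=\nabla v\wedge\nabla w$ (up to the sign fixed by the orientation of the coupling term) precisely when $\nabla v\wedge\nabla w$ is already a gradient, i.e. when the above distance vanishes; in that case $\nabla u+\nabla v\wedge\nabla w$ is simultaneously a gradient and, by the optimality condition, $L^2$-orthogonal to every gradient, hence it vanishes.

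The heart of the matter, and the step I expect to be the main obstacle, is thus to show that $\nabla v\wedge\nabla w$ is curl-free and hence, on the simply connected model domain $\Omega$, a genuine gradient. This is the three-dimensional counterpart of the elementary planar identity $\operatorname{curl}(\bQ\nabla v)=\Delta v$, which makes $\bQ\nabla v$ a gradient exactly when $v$ is harmonic; here both the harmonicity $\Delta v=0$ and the eikonal structure of $w$ (note that $|\nabla w|^2=1$ forces $(\nabla^2 w)\nabla w=\bcero$) must enter to control $\operatorname{curl}(\nabla v\wedge\nabla w)=\nabla v\,\Delta w+(\nabla^2 v)\nabla w-(\nabla^2 w)\nabla v$, which is exactly the solvability condition under which $v$ admits a conjugate. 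Once $\nabla u=\nabla v\wedge\nabla w$ is secured, the companion identity $\nabla v=\nabla w\wedge\nabla u$ follows verbatim from the computation in Theorem \ref{teor3d}: one expands $\nabla w\wedge(\nabla v\wedge\nabla w)$ through the triple-product formula \eqref{igualdady} and uses $\nabla v\cdot\nabla w=0$ together with $|\nabla w|^2=1$. Strict convexity then guarantees that this conjugate $u$ is the only minimizer, which is the asserted uniqueness of the representation.
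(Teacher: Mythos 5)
Your reduction of the question to whether $\nabla v\wedge\nabla w$ is a gradient is exactly right: the completion of the square and the characterization of the minimizer as the $L^2$-projection of $-\nabla v\wedge\nabla w$ onto gradients are clean and correct, and you have isolated the crux. But the step you defer as "the main obstacle" is not merely hard — it is false under the hypotheses as you (and, literally read, the statement) take them. For an arbitrary harmonic $v$ the field $\nabla v\wedge\nabla w$ need not be curl-free even when $|\nabla w|^2=1$. Take $\Omega$ the unit ball, $w(\bbx)=x_3$ and $v(\bbx)=x_1x_3$ (harmonic): then $\nabla v\wedge\nabla w=(0,-x_3,0)$ and $\operatorname{curl}(\nabla v\wedge\nabla w)=(1,0,0)\neq\bcero$, consistent with your own formula since the term $(\nabla^2 v)\nabla w=(1,0,0)$ survives, while the eikonal identity $(\nabla^2 w)\nabla w=\bcero$ only controls the other term. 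Equivalently, $\nabla v\cdot\nabla w=x_1\not\equiv 0$, so this $v$ admits no conjugate with respect to $w$ at all. Hence no combination of $\Delta v=0$ and $|\nabla w|^2=1$ can close your argument; the solvability condition you correctly identify is a genuine extra hypothesis on $v$, not a consequence of harmonicity.

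The proposition must therefore be read in the context the paper sets up ("to find one of the two from the other"): $v$ is one member of a conjugate pair $(u_0,v)$ whose existence is supplied by Theorem \ref{teor3d}. Under that reading your projection argument closes instantly: $\nabla v\wedge\nabla w=\nabla u_0$ is a gradient by definition, the $L^2$-distance you introduced vanishes, and the optimality condition $\int_\Omega(\nabla u+\nabla u_0)\cdot\nabla U\,d\bbx=0$ for all $U\in H^1(\Omega)$ forces $\nabla u=-\nabla u_0$; no curl computation is needed (and none appears in the paper, which states the proposition without proof as a by-product of Section \ref{tres}). Note finally that the sign you flagged is real and worth making explicit: with the $+$ coupling in the functional the minimizer is $-u_0$ up to an additive constant, i.e.\ the conjugate of $-v$ rather than of $v$, so either the sign of the coupling term or the normalization of "conjugate" has to be adjusted.
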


If we formally define the unit tangent vector field to $\partial\Omega$ given by
\begin{equation}\label{tangente}
\bbt(\bbx)=\frac1{\phi(\bbx)}\nabla w(\bbx)\wedge\bbn(\bbx),\quad \phi(\bbx)=|\nabla w(\bbx)\wedge\bbn(\bbx)|,
\end{equation}
then we can also find $u$ from $v$  by formally looking at the alternative variational problem
$$
\hbox{Minimize in }u\in H^1(\Omega):\quad \int_\Omega\frac12|\nabla u(\bbx)|^2\,d\bbx+\int_{\partial\Omega} u(\bbx) \nabla v(\bbx)\cdot\bbt(\bbx) \phi(\bbx)\,dS(\bbx).
$$
In fact, the boundary condition coming from optimality for the variational problem in Proposition \ref{una-otra} amounts to
$$
(\nabla u+\nabla v\wedge\nabla w)\cdot\bbn=0\hbox{ on }\partial\Omega.
$$
Yet, apparently, it is not possible to interpret the isolated condition 
\begin{equation}\label{nueva}
(\nabla v\wedge\nabla w)\cdot\bbn=0\hbox{ on }\partial\Omega
\end{equation}
with a proper precise meaning for the function $v$, beyond formally saying that the tangential derivative of $v$ along the tangent vector field $\bbt$ in \eqref{tangente} should vanish. This step looks important when exploring the possibilities of using this approach to tackle Calderón's problem in 3D. As a matter of fact, such boundary conditions have been introduced and explored in \cite{pedregal} precisely motivated by the standard inverse conductivity problem in 3D. Roughly speaking, the subspace of functions of $H^1(\Omega)$ for which \eqref{nueva} is valid is 
$$
\bbH_w=H^1_0(\Omega)+\bbL_w, \quad \bbL_w=\{\phi(w)\in H^1(\Omega)\}.
$$
Note that, in cases of interest, 
$$
H^1_0(\Omega)\subsetneq \bbH_w\subsetneq H^1(\Omega),
$$
and so, variational problems posed on these subspaces lead to boundary conditions properly between Dirichlet and Neumann (in a very different way compared to mixed problems). All of these issues are investigated in detail in \cite{pedregal}. 

It is interesting to notice how functions $v$ in the subspace $\bbL_w$ are such that
$$
\nabla v\wedge\nabla w=\bcero\hbox{ in }\Omega.
$$
This is related to the fact that pairs of conjugate harmonic functions with respect to $w$ can never be functions of $w$ itself. This is clearly seen in constraint \eqref{tresdeiz}, or in the variational problem in Proposition \ref{una-otra}. Other than this comment, pairs of conjugate harmonic functions in 3D with respect to a given unitary gradient $|\nabla w|^2=1$ are determined in a unique way except for arbitrary additive constants. 

\section{The non-unitary situation}\label{cinco}
These are three explicit examples, valid if the domain $\Omega$ is taken to be the unit ball $\bB$ of $\R^3$, 
\begin{gather}
w(x_1, x_2, x_3)=\sqrt{x_1^2+x_2^2+(x_3+2)^2},\nonumber\\
w(x_1, x_2, x_3)=\sqrt{x_1^2+(x_3+2)^2},\nonumber\\
w(x_1, x_2, x_3)=x_3+2.\nonumber
\end{gather}
Taking supremum or infimum on functions of this nature, more explicit examples can be written. However, the unitary condition on a gradient looks like a somewhat tedious constraint to deal with. What is lost in our main result Theorem \ref{principal} if we simply consider a function $w$ with no condition on the size of its gradient?

If we insist in keeping essentially the same two main steps of the proof of Theorem \ref{teor3d}, we loose the harmonicity of one of the two functions, but the other remains harmonic. 

\begin{theorem}\label{armnoarm}
Let $w(\bbx):\Omega\subset\R^3\to\R^3$ be a Lipschitz function with
$$
0<C\le |\nabla w(\bbx)|^2\le \frac1C\hbox{ in }\Omega, \quad C>0.
$$
Then there is a harmonic function $u$, and a solution $v$ of the equation
$$
\dv(|\nabla w(\bbx)|^2\nabla v)=0\hbox{ in }\Omega,
$$
such that
$$
\nabla u(\bbx)=\nabla v(\bbx)\wedge\nabla w(\bbx),\quad |\nabla w(\bbx)|^2\nabla v(\bbx)=\nabla w(\bbx)\wedge\nabla u(\bbx)
$$
a. e. $\bbx$ in $\Omega$. 
\end{theorem}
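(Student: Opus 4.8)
The plan is to run the two-step scheme of Theorem \ref{teor3d} essentially verbatim, with the single modification that the quadratic energy carried by the second component is weighted by $|\nabla w|^2$. Concretely, I would study the problem of minimizing, over $(u,v)\in H^1(\Omega;\R^2)$, the functional $\frac12\int_\Omega(|\nabla u(\bbx)|^2 + |\nabla w(\bbx)|^2\,|\nabla v(\bbx)|^2)\,d\bbx$ subject to $\int_\Omega\nabla u\cdot(\nabla v\wedge\nabla w)\,d\bbx = c\neq0$, together with the usual normalization of averages. The reason for the weight $|\nabla w|^2$ is already visible at the pointwise level: from $\det(\nabla u,\nabla v,\nabla w)=\nabla u\cdot(\nabla v\wedge\nabla w)\le\frac12|\nabla u|^2+\frac12|\nabla v\wedge\nabla w|^2$ together with $|\nabla v\wedge\nabla w|^2=|\nabla w|^2|\nabla v|^2-(\nabla v\cdot\nabla w)^2$, one sees that it is exactly this weight that makes the completion of squares close without appealing to $|\nabla w|=1$.

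For Step 1 (existence) nothing changes in substance. Given a minimizing sequence $(u_j,v_j)$, I would freeze $v_j$ and minimize $\frac12\int|\nabla u|^2$ under $\int\nabla u\cdot(\nabla v_j\wedge\nabla w)=c$; the Piola identity $\dv(\nabla v_j\wedge\nabla w)=0$ forces the scalar minimizer $U_j$ to be harmonic, so $(U_j,v_j)$ is again minimizing and uniformly bounded — here the lower bound $|\nabla w|^2\ge C>0$ supplies the coercivity in the $v$ variable. Harmonicity gives $U_j\to u$ strongly while $v_j\rightharpoonup v$; since $\nabla w\in L^\infty$ one has $\nabla v_j\wedge\nabla w\rightharpoonup\nabla v\wedge\nabla w$ weakly, and the strong-times-weak product passes the constraint to the limit, so $(u,v)$ is a minimizer.

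For Step 2 (optimality) I would write the augmented functional $\int_\Omega(\frac12|\nabla u|^2+\frac12|\nabla w|^2|\nabla v|^2-\nabla u\cdot(\nabla v\wedge\nabla w))\,d\bbx$, normalizing the multiplier exactly as in Sections \ref{dos}--\ref{tres}. Stationarity in $u$ gives $\dv(\nabla u-\nabla v\wedge\nabla w)=0$, hence $\Delta u=0$ by Piola; stationarity in $v$ gives $\dv(|\nabla w|^2\nabla v-\nabla w\wedge\nabla u)=0$, hence $\dv(|\nabla w|^2\nabla v)=0$ again by Piola — so $u$ is harmonic and $v$ solves the weighted equation, as claimed. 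Testing the two Euler--Lagrange identities with $u$ and $v$ respectively, and using cyclicity of the triple product, yields $\int|\nabla u|^2=\int|\nabla w|^2|\nabla v|^2=\int\nabla u\cdot(\nabla v\wedge\nabla w)$; feeding this into $|\nabla v\wedge\nabla w|^2=|\nabla w|^2|\nabla v|^2-(\nabla v\cdot\nabla w)^2$ collapses the augmented functional to $\int_\Omega(\frac12|\nabla u-\nabla v\wedge\nabla w|^2+\frac12(\nabla v\cdot\nabla w)^2)\,d\bbx=0$. Hence $\nabla u=\nabla v\wedge\nabla w$ and $\nabla v\cdot\nabla w=0$ a.e., and the identity $\nabla w\wedge(\nabla v\wedge\nabla w)=|\nabla w|^2\nabla v-(\nabla v\cdot\nabla w)\nabla w$ then delivers $|\nabla w|^2\nabla v=\nabla w\wedge\nabla u$.

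The main obstacle, and the only genuine departure from the unitary proof, is the bookkeeping that makes the square-completion close: the weight $|\nabla w|^2$ on the $v$-energy is forced by the term $|\nabla w|^2|\nabla v|^2$ sitting inside $|\nabla v\wedge\nabla w|^2$, and one must also verify — exactly as in the 2D and unitary 3D cases — that the Lagrange multiplier can be normalized to $1$, so that the augmented integrand is literally a sum of squares. Once the weight is in place this is routine, but identifying it (rather than the naive $\frac12|\nabla v|^2$) is the crux: with the naive energy the interior equation for $v$ would be plain harmonicity and the completion of squares would leave an uncontrolled $|\nabla w|^2$-dependent remainder, so that neither the weighted equation for $v$ nor the pointwise relations would follow.
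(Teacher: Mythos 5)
Your proposal is correct and follows essentially the same route as the paper: the same weighted functional $\frac12\int_\Omega(|\nabla u|^2+|\nabla w|^2|\nabla v|^2)\,d\bbx$ under the same determinant constraint, the same two-step scheme (harmonic replacement of the first component via Piola, then completion of squares in the augmented functional), and the same vector identity to recover $|\nabla w|^2\nabla v=\nabla w\wedge\nabla u$. The point you single out as the crux --- that the weight $|\nabla w|^2$ on the $v$-energy is exactly what makes the sum-of-squares identity close --- is indeed the only substantive change the paper makes relative to the unitary case.
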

As claimed, the proof follows line by line that of Theorem \ref{teor3d} under suitable, minor changes. We explore the vector variational problem
\begin{equation}\label{tresde}
\hbox{Minimize in }(u, v)\in H^1(\Omega; \R^2):\quad \frac12\int_\Omega(|\nabla u(\bbx)|^2+|\nabla w(\bbx)|^2\,|\nabla v(\bbx)|^2)\,d\bbx
\end{equation}
under the integral constraint
\begin{equation}\label{tresdei}
\int_\Omega \nabla u(\bbx)\cdot(\nabla v(\bbx)\wedge\nabla w(\bbx))\,d\bbx=c(\neq0), \hbox{ a constant}.
\end{equation}
Step 1 of the proof of Theorem \ref{teor3d} runs exactly in the same way to conclude the existence of minimizers $(u, v)$ for this constrained variational problem. Note that it looks like a fundamental assumption to rely on the harmonicity of $U_j$ for a uniformly bounded, minimizing sequence $(U_j, v_j)$. Moving to Step 2, we conclude that such minimizers should comply with optimality conditions for the augmented functional
$$
 \int_\Omega\left(\frac12|\nabla u(\bbx)|^2+|\nabla w(\bbx)|^2\,|\nabla v(\bbx)|^2-\nabla u(\bbx)\cdot(\nabla v(\bbx)\wedge\nabla w(\bbx)\right)\,d\bbx,
 $$
 namely, 
\begin{gather}
\int_\Omega\left(\nabla u(\bbx)-\nabla v(\bbx)\wedge\nabla w(\bbx)\right)\cdot\nabla U(\bbx)\,d\bbx=0,\nonumber\\
\int_\Omega\left(|\nabla w(\bbx)|^2\,\nabla v(\bbx)-\nabla w(\bbx)\wedge\nabla u(\bbx)\right)\cdot\nabla U(\bbx)\,d\bbx=0,\nonumber
\end{gather}
for every $U\in H^1(\Omega)$. In particular
\begin{gather}
\int_\Omega\left(\nabla u(\bbx)-\nabla v(\bbx)\wedge\nabla w(\bbx)\right)\cdot\nabla u(\bbx)\,d\bbx=0,\nonumber\\
\int_\Omega\left(|\nabla w(\bbx)|^2\,\nabla v(\bbx)-\nabla w(\bbx)\wedge\nabla u(\bbx)\right)\cdot\nabla v(\bbx)\,d\bbx=0.\nonumber
\end{gather}
From these two identities, we conclude that
$$
\int_\Omega\left(\frac12|\nabla u(\bbx)|^2+|\nabla w(\bbx)|^2\,|\nabla v(\bbx)|^2-\nabla u(\bbx)\cdot(\nabla v(\bbx)\wedge\nabla w(\bbx)\right)\,d\bbx=0.
$$
The integral on the left-hand side can be rewritten in the form
$$
\int_\Omega\left(\frac12|\nabla u(\bbx)-\nabla v(\bbx)\wedge\nabla w(\bbx)|^2+\frac12(\nabla v(\bbx)\cdot\nabla w(\bbx))^2\right)\,d\bbx=0,
$$
in such a way that
$$
\nabla u(\bbx)=\nabla v(\bbx)\wedge\nabla w(\bbx),\quad \nabla v(\bbx)\cdot\nabla w(\bbx)=0,
$$
for a.e. $\bbx\in\Omega$. By utilizing \eqref{igualdady}, we find
$$
\nabla w(\bbx)\wedge\nabla u(\bbx)=|\nabla w(\bbx)|^2 \nabla v(\bbx).
$$
Note that in this case
$$
\det(\nabla u(\bbx), \nabla v(\bbx), \nabla w(\bbx))=|\nabla u(\bbx)|^2=|\nabla v(\bbx)|^2\,|\nabla w(\bbx)|^2.
$$

\section{A broader context}\label{seis}
The analysis in Sections \ref{dos} and \ref{tres} can be generalized without much effort to a more general setting closer to the classical inverse problem in conductivity. Suppose the conductivity coefficient $\gamma$ is measurable and 
\begin{equation}\label{coefcond}
0<C\le\gamma(\bbx)\le\frac1C\hbox{ in }\Omega, \hbox{ for a.e. }\bbx\in\Omega.
\end{equation}
\begin{proposition}
There are non-trivial pairs $(u, v)$ of functions in $H^1(\Omega)$ such that
$$
\gamma(\bbx)\nabla u(\bbx)+\bQ\nabla v(\bbx)=\bcero\hbox{ in }\Omega.
$$
\end{proposition}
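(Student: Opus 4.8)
The plan is to transcribe the two-step variational scheme of Section~\ref{dos}, now weighted by the conductivity $\gamma$, so that the concluding completion of squares reproduces the target relation $\gamma\nabla u+\bQ\nabla v=\bcero$ instead of \eqref{C-R}. Accordingly I would study the constrained vector problem of minimizing, over $(u,v)\in H^1(\Omega;\R^2)$,
$$
\frac12\int_\Omega\left(\gamma(\bbx)\,|\nabla u(\bbx)|^2+\frac1{\gamma(\bbx)}\,|\nabla v(\bbx)|^2\right)d\bbx
$$
subject to the same Jacobian constraint $\int_\Omega\det(\nabla u(\bbx),\nabla v(\bbx))\,d\bbx=c\,(\neq0)$. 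By \eqref{coefcond} both $\gamma\ge C$ and $1/\gamma\ge C$, so the functional is coercive in $H^1(\Omega;\R^2)$ after the usual normalization of averages through Poincaré-Wirtinger. The weights are chosen precisely so that, with a Lagrange multiplier normalized to $1$, the augmented functional
$$
\int_\Omega\left(\frac12\gamma|\nabla u|^2+\frac1{2\gamma}|\nabla v|^2+\nabla u\cdot\bQ\nabla v\right)d\bbx
$$
has integrand equal to $\tfrac12\,|\gamma^{1/2}\nabla u+\gamma^{-1/2}\bQ\nabla v|^2$, where one uses $|\bQ\nabla v|=|\nabla v|$ and $\nabla u\cdot\bQ\nabla v=-\det(\nabla u,\nabla v)$.

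For existence (Step~1) I would repeat, almost verbatim, the argument of the first proposition of Section~\ref{dos}. Given a minimizing sequence $(u_j,v_j)$, I freeze $v_j$ and minimize $\tfrac12\int_\Omega\gamma|\nabla u|^2$ over $u\in H^1(\Omega)$ under $\int_\Omega\det(\nabla u,\nabla v_j)=c$; the direct method produces a unique minimizer $U_j$, whose optimality condition is $\dv(\gamma\nabla U_j+\lambda_j\bQ\nabla v_j)=0$. Since $\bQ\nabla v_j$ is a rotated gradient, $\dv(\bQ\nabla v_j)=0$, and hence $\dv(\gamma\nabla U_j)=0$: each $U_j$ solves the fixed conductivity equation. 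As $U_j$ is admissible with energy no larger than that of $u_j$, the pair $(U_j,v_j)$ is still minimizing and bounded, and along a subsequence $U_j\rightharpoonup u$, $v_j\rightharpoonup v$. The feasibility of $(u,v)$ then hinges on the passage of the constraint $\int_\Omega\det(\nabla U_j,\nabla v_j)\to\int_\Omega\det(\nabla u,\nabla v)$, which I discuss as the main obstacle below.

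For optimality (Step~2), since the value of $c$ is immaterial the minimizer must be a critical point of the augmented functional under no boundary condition. Varying in $u$ and in $v$ gives $\int_\Omega(\gamma\nabla u+\bQ\nabla v)\cdot\nabla U=0$ and $\int_\Omega(\tfrac1\gamma\nabla v-\bQ\nabla u)\cdot\nabla U=0$ for every $U\in H^1(\Omega)$; testing the first with $u$ and the second with $v$ and adding, elementary algebra collapses the augmented functional to
$$
\int_\Omega\frac12\left|\gamma^{1/2}\nabla u+\gamma^{-1/2}\bQ\nabla v\right|^2 d\bbx=0 .
$$
Thus $\gamma^{1/2}\nabla u+\gamma^{-1/2}\bQ\nabla v=\bcero$, that is $\gamma\nabla u+\bQ\nabla v=\bcero$ a.e. in $\Omega$, while $c\neq0$ rules out the trivial pair.

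The delicate point, flagged already after \eqref{det2D}, is the passage of the Jacobian constraint to the limit, equivalently the compactness of $U_j$. In the classical case this was \emph{a consequence of harmonicity} (strong $H^1$ convergence of $U_j$); here $\gamma$ is merely measurable, no elliptic regularity is available, and the bare $H^1$ bound leaves $\det(\nabla U_j,\nabla v_j)$ only in $L^1$, with possible concentration. I expect this to be the crux, and the reason the auxiliary problem is indispensable: replacing the arbitrary $u_j$ by the $\gamma$-harmonic $U_j$ buys Meyers' higher integrability $\nabla U_j\in L^{2+\delta}$, uniformly in $j$ since $\gamma$ is fixed. Because then $\tfrac1{2+\delta}+\tfrac12<1$, the null Lagrangian $\det(\nabla U_j,\nabla v_j)$ becomes equi-integrable and weakly continuous, so that the div-curl structure ($\bQ\nabla v_j$ divergence-free, $\nabla U_j$ curl-free) yields $\int_\Omega\det(\nabla U_j,\nabla v_j)\to\int_\Omega\det(\nabla u,\nabla v)$. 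This improved integrability is exactly what replaces harmonicity and restores the compactness that the balance between quadratic growth and the quadratic constraint denied at first sight.
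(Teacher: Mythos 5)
Your overall scheme is the one the paper uses: the same $\gamma$-weighted functional under the same constraint \eqref{dosdei}, the same auxiliary scalar problem producing $U_j$ with $\dv(\gamma\nabla U_j)=0$, and the same completion of squares in Step~2, which you carry out correctly. The only point where you genuinely depart from the paper is the crucial compactness step, and you rightly identify it as the crux. The paper does \emph{not} go through higher integrability: it applies the div-curl lemma to the pair $\nabla U_j$ (curl-free) and $\gamma\nabla U_j$ (divergence-free) to obtain the convergence of energies \eqref{integrales}, and then combines this with the weak convergence $\nabla U_j\rightharpoonup\nabla u$ and the strict convexity of $U\mapsto\int_\Omega\gamma|\nabla U|^2$ to upgrade to the strong convergence \eqref{fuerte}. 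Once $\nabla U_j\to\nabla u$ strongly in $L^2(\Omega)$, the constraint passes to the limit exactly as in the harmonic case, since strong-times-weak convergence handles $\int_\Omega\det(\nabla U_j,\nabla v_j)$ over all of $\Omega$. This is the direct analogue of the role harmonicity played in Section~\ref{dos}.

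Your substitute --- Meyers' higher integrability giving uniform $L^{2+\delta}$ bounds on $\nabla U_j$ and hence equi-integrability of the Jacobians --- has a genuine gap as stated. For merely measurable $\gamma$ as in \eqref{coefcond}, Meyers' estimate for $\dv(\gamma\nabla U_j)=0$ is an \emph{interior} estimate; the global version up to $\partial\Omega$ on a Lipschitz domain would require control of the natural boundary condition, which here reads $\gamma\nabla U_j\cdot\bbn=-\lambda_j\,\bQ\nabla v_j\cdot\bbn$ and is governed by data only bounded in $L^2$. So you obtain equi-integrability of $\det(\nabla U_j,\nabla v_j)$ only on subdomains $\Omega'\Subset\Omega$, while the constraint is an integral over all of $\Omega$: concentration of the Jacobian in a boundary layer is not excluded, and the limit pair could fail to satisfy the constraint with the value $c$ (conceivably even ending up with value $0$, which would destroy non-triviality). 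If you want to pursue your route you must either prove a uniform-in-$j$ global $L^{2+\delta}$ bound (which the $L^2$ Neumann data does not give) or add an argument ruling out boundary concentration; otherwise the cleanest fix is to revert to the div-curl-plus-strict-convexity argument, which yields global strong convergence and closes the proof.
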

As it is standard, one readily sees that $u$ and $v$ are solutions of the respective conductivity equations
$$
\dv(\gamma\nabla u)=0,\quad \dv\left(\frac1\gamma\nabla v\right)=0
$$
in $\Omega$, and there is an intimate connection between boundary values of both around $\partial\Omega$. 
\begin{proof}
The proof is formally the same as that in Section \ref{dos}. The starting point is the modified functional
$$
\frac12\int_\Omega\left(\gamma(\bbx)|\nabla u(\bbx)|^2+\frac1{\gamma(\bbx)}|\nabla v(\bbx)|^2\right)\,d\bbx
$$
under exactly the same integral constraint \eqref{dosdei}. The proof proceeds in the same terms as in Section \ref{dos}. The only point that deserves an important comment relates to the strong convergence $\nabla U_j\to\nabla u$ in $L^2(\Omega)$ that must be deduced from the information  
$$
\nabla U_j\rightharpoonup\nabla u\hbox{ in }L^2(\Omega),\quad \dv(\gamma\nabla U_j)=0\hbox{ in }\Omega.
$$
Recall that in the case $\gamma\equiv1$, that fact was achieved by exploiting the harmonicity of the modified first component $u_j\mapsto U_j$ of a minimizing sequence of pairs $(u_j, v_j)$. In this more general framework, we invoke the classical div-curl lemma (\cite{murat}, \cite{murat2}, \cite{tartar}). Indeed, as a result of this fundamental principle and the above differential equation that $U_j$ complies with, one can conclude that
\begin{equation}\label{integrales}
\int_\Omega \gamma(\bbx)\nabla U_j(\bbx)\cdot\nabla U_j(\bbx)\,d\bbx\to \int_\Omega \gamma(\bbx)\nabla u(\bbx)\cdot\nabla u(\bbx)\,d\bbx.
\end{equation}
The strict convexity of the quadratic functional
$$
U\in H^1(\Omega)\mapsto\int_\Omega\gamma(\bbx)|\nabla U(\bbx)|^2\,d\bbx
$$
together with the weak convergence 
$$
\nabla U_j\rightharpoonup\nabla u\hbox{ in }L^2(\Omega)
$$
and the convergence of the integrals in \eqref{integrales} imply the desired strong convergence 
\begin{equation}\label{fuerte}
\nabla U_j\to\nabla u\hbox{ in }L^2(\Omega).
\end{equation}
This is something well-established.  Though it will most likely be written  in several places, one can check  the version in Theorem 3.16 in \cite{pedregalb} for a similar result in a more general setting.
Once the strong convergence in \eqref{fuerte} is shown, the rest of the proof follows line by line the previous one with the corresponding changes. 
\end{proof}
The extension to the 3D situation, after the remark in the preceding proof, is now straightforward with the obvious changes. Suppose $\Omega\subset\R^3$ is a domain as in Section \ref{tres}. Let $\gamma$ verify \eqref{coefcond}, and let $w(\bbx)$ be a Lipschitz function with a unitary gradient $|\nabla w(\bbx)|^2=1$ for a.e. $\bbx\in\Omega$. 
\begin{theorem}\label{final}
There are non-trivial pairs of functions $(u, v)$ in $H^1(\Omega)$ such that
\begin{equation}\label{identidadg}
\gamma(\bbx)\nabla u(\bbx)=\nabla v(\bbx)\wedge\nabla w(\bbx),\quad \frac1{\gamma(\bbx)}\nabla v(\bbx)=\nabla w(\bbx)\wedge\nabla u(\bbx)
\end{equation}
a. e. $\bbx$ in $\Omega$. 
\end{theorem}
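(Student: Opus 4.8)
The plan is to run the same two-step variational scheme used for Theorem \ref{teor3d}, but with the quadratic part of the functional weighted by $\gamma$ and $1/\gamma$ exactly as in the 2D conductivity proposition of Section \ref{seis}. Concretely, I would study
$$
\hbox{Minimize in }(u,v)\in H^1(\Omega;\R^2):\quad \frac12\int_\Omega\left(\gamma(\bbx)|\nabla u(\bbx)|^2+\frac1{\gamma(\bbx)}|\nabla v(\bbx)|^2\right)\,d\bbx
$$
subject to the same integral constraint
$$
\int_\Omega\nabla u(\bbx)\cdot(\nabla v(\bbx)\wedge\nabla w(\bbx))\,d\bbx=c(\neq0),
$$
(with the usual normalization of averages over $\Omega$). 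The weights are chosen so that the two vector fields appearing in the Euler--Lagrange equations are precisely $\gamma\nabla u-\nabla v\wedge\nabla w$ and $\frac1\gamma\nabla v-\nabla w\wedge\nabla u$, which are the combinations that must vanish in \eqref{identidadg}.

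For Step 1 (existence of minimizers) I would transcribe the earlier argument. Given a minimizing sequence $(u_j,v_j)$, I freeze $v_j$ and replace $u_j$ by the minimizer $U_j$ of $\frac12\int_\Omega\gamma|\nabla u|^2$ under the linear constraint $\int_\Omega\nabla u\cdot(\nabla v_j\wedge\nabla w)=c$. Since $\dv(\nabla v_j\wedge\nabla w)=0$ by the Piola identity, the optimality condition for $U_j$ collapses to $\dv(\gamma\nabla U_j)=0$ in $\Omega$, and $(U_j,v_j)$ remains minimizing and bounded in $H^1(\Omega;\R^2)$. The one genuinely new point, just as in Section \ref{seis}, is that harmonicity is no longer available to promote $\nabla U_j\rightharpoonup\nabla u$ to a strong limit; instead I invoke the div-curl lemma for the divergence-free fields $\gamma\nabla U_j$ and the curl-free fields $\nabla U_j$ to obtain $\int_\Omega\gamma|\nabla U_j|^2\to\int_\Omega\gamma|\nabla u|^2$, whence $\nabla U_j\to\nabla u$ strongly in $L^2(\Omega)$ by strict convexity. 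Strong convergence of $\nabla U_j$, together with $\nabla v_j\rightharpoonup\nabla v$ and the fixed $L^\infty$ field $\nabla w$, lets the constraint pass to the limit, so $(u,v)$ is feasible and therefore a minimizer.

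For Step 2 (optimality) I would argue, as before, that the minimizer solves the optimality system for the augmented functional
$$
\int_\Omega\left(\frac12\gamma|\nabla u|^2+\frac1{2\gamma}|\nabla v|^2-\nabla u\cdot(\nabla v\wedge\nabla w)\right)\,d\bbx
$$
under no boundary restriction, so that $\int_\Omega(\gamma\nabla u-\nabla v\wedge\nabla w)\cdot\nabla U\,d\bbx=0$ and $\int_\Omega(\frac1\gamma\nabla v-\nabla w\wedge\nabla u)\cdot\nabla V\,d\bbx=0$ for all $U,V\in H^1(\Omega)$. Testing the first with $U=u$, the second with $V=v$, and using the cyclic identity $\nabla v\cdot(\nabla w\wedge\nabla u)=\nabla u\cdot(\nabla v\wedge\nabla w)$, I find that the augmented functional vanishes at the minimizer. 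The decisive algebraic step is the completion of squares
$$
\frac12\gamma|\nabla u|^2+\frac1{2\gamma}|\nabla v|^2-\nabla u\cdot(\nabla v\wedge\nabla w)=\frac1{2\gamma}|\gamma\nabla u-\nabla v\wedge\nabla w|^2+\frac1{2\gamma}(\nabla v\cdot\nabla w)^2,
$$
which rests on $|\nabla w|^2=1$ and on $|\bv\wedge\bw|^2+(\bv\cdot\bw)^2=|\bv|^2|\bw|^2$. As $\gamma>0$, vanishing of the integral forces $\gamma\nabla u=\nabla v\wedge\nabla w$ and $\nabla v\cdot\nabla w=0$ a.e. in $\Omega$; substituting the first into $\nabla w\wedge\nabla u$ and applying \eqref{igualdady} with the orthogonality $\nabla v\cdot\nabla w=0$ and \eqref{unidad} yields $\frac1\gamma\nabla v=\nabla w\wedge\nabla u$, which completes \eqref{identidadg}.

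I expect the only real obstacle to be the strong $L^2$-convergence of $\nabla U_j$ in Step 1: with $\gamma\not\equiv1$ the modified first component is no longer harmonic, so one must lean on the div-curl lemma exactly as recorded in Section \ref{seis} (e.g. Theorem 3.16 in \cite{pedregalb}). Everything else is a routine transcription of the calculations already performed for Theorem \ref{teor3d}.
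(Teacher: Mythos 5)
Your proposal is correct and follows essentially the same route the paper intends: it combines the two-step variational argument of Theorem \ref{teor3d} (with the $\gamma$- and $1/\gamma$-weighted quadratic terms) with the div-curl lemma substitute for harmonicity already recorded in the 2D conductivity proposition of Section \ref{seis}, which is precisely what the paper means by ``straightforward with the obvious changes.'' Your completion of squares and the final application of \eqref{igualdady} check out, so nothing further is needed.
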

The point we would like to stress is that given a solution $u$ of the conductivity equation
$$
\dv(\gamma \nabla u)=0\hbox{ in }\Omega,
$$
there are infinitely-many $v'$s, one for each feasible $w$, for which vector equation \eqref{identidadg} is valid. 

These ideas are reminiscent of the classical Calderón's inverse conductivity problem. Let $\Omega$ be a bounded, simply-connected domain in $\R^N$, $N=2, 3$. 
Given a valid conductivity coefficient $\gamma$ as in \eqref{coefcond}, we define the Dirichlet-to-Neumann operator
$$
\Lambda_\gamma:H^{1/2}(\Omega)\mapsto H^{-1/2}(\Omega)
$$
determined by putting
$$
\Lambda_\gamma(u^\circ)=v^\circ,\quad v^\circ=\left.\frac{\partial u}{\partial\bbn}\right|_{\partial\Omega},
$$
where $u\in H^1(\Omega)$ is the unique solution of the problem
$$
\dv(\gamma \nabla u)=0\hbox{ in }\Omega,\quad u=u^\circ\hbox{ on }\partial\Omega.
$$
In \cite{astala}, it was shown that for $N=2$, the map $\gamma\mapsto\Lambda_\gamma$ is one-to-one. 
It has been conjectured that this is not so for higher dimension (see \cite{railoz} for a recent account). It is somewhat natural to support this conjecture from our point of view here since  the presence of the undetermined unit gradient $|\nabla w(\bbx)|^2=1$ might be somehow responsible for the lack of uniqueness of the operator $\Lambda$ with respect to the conductivity coefficient it comes from. A fundamental issue in this area is the regularity where conductivity coefficients are searched for since uniqueness results or counterexamples may dramatically depend on such regularity assumed on classes of conductivity coefficients (check again \cite{railoz}). 

%
In the same vein as Theorem \ref{final}, exploiting the use of the conductivity coefficient $\gamma$ and the norm of the gradient $|\nabla w|$, one can easily show the following variant of Theorem \ref{armnoarm}. 
\begin{corollary}
Let $w(\bbx):\Omega\subset\R^3\to\R^3$ be a Lipschitz function with
$$
0<C\le |\nabla w(\bbx)|^2\le \frac1C\hbox{ in }\Omega, \quad C>0.
$$
Then there are two solutions $u, v\in H^1(\Omega)$ of the conductivity equation
$$
\dv(|\nabla w(\bbx)|\nabla U)=0\hbox{ in }\Omega,\quad U=u, v, 
$$
such that
$$
|\nabla w(\bbx)|\nabla u(\bbx)=\nabla v(\bbx)\wedge\nabla w(\bbx),\quad |\nabla w(\bbx)|\nabla v(\bbx)=\nabla w(\bbx)\wedge\nabla u(\bbx)
$$
a. e. $\bbx$ in $\Omega$. 
\end{corollary}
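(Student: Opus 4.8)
The plan is to run once more the two-step scheme behind Theorems \ref{teor3d}, \ref{armnoarm} and \ref{final}, the only new ingredient being a judicious choice of conductivity so that the two resulting Euler--Lagrange equations coincide. In the general framework suggested by Theorem \ref{final}, I would consider the constrained problem
$$
\hbox{Minimize in }(u,v)\in H^1(\Omega;\R^2):\quad \frac12\int_\Omega\left(\gamma(\bbx)|\nabla u|^2+\frac{|\nabla w|^2}{\gamma(\bbx)}|\nabla v|^2\right)d\bbx
$$
under the integral constraint $\int_\Omega\nabla u\cdot(\nabla v\wedge\nabla w)\,d\bbx=c\neq0$, and then make the symmetric choice $\gamma=|\nabla w|$. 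With this choice both weights collapse to $|\nabla w|$, which by hypothesis satisfies $\sqrt C\le|\nabla w|\le1/\sqrt C$ and is therefore an admissible conductivity bounded between positive constants; the functional becomes $\frac12\int_\Omega|\nabla w|(|\nabla u|^2+|\nabla v|^2)\,d\bbx$, symmetric in $u$ and $v$, under the same constraint.

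For Step 1 I would argue exactly as in Section \ref{seis}. Given a minimizing sequence $(u_j,v_j)$, I freeze $v_j$ and minimize the scalar functional $\frac12\int_\Omega|\nabla w|\,|\nabla u|^2$ under $\int_\Omega\nabla u\cdot(\nabla v_j\wedge\nabla w)\,d\bbx=c$; its unique minimizer $U_j$ satisfies $\dv(|\nabla w|\nabla U_j+\lambda_j\,\nabla v_j\wedge\nabla w)=0$, and since $\dv(\nabla v_j\wedge\nabla w)=0$ by the Piola identity, one gets $\dv(|\nabla w|\nabla U_j)=0$ in $\Omega$. The pair $(U_j,v_j)$ is again minimizing and bounded in $H^1(\Omega;\R^2)$. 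The one genuinely analytic point is the strong convergence $\nabla U_j\to\nabla u$ in $L^2(\Omega)$: this is precisely the div-curl argument of Section \ref{seis}, where $|\nabla w|\nabla U_j$ is divergence-free and $\nabla U_j$ curl-free, so that $\int_\Omega|\nabla w||\nabla U_j|^2\to\int_\Omega|\nabla w||\nabla u|^2$ and strict convexity upgrades weak to strong convergence. With $\nabla U_j\to\nabla u$ strongly and $\nabla v_j\wedge\nabla w\rightharpoonup\nabla v\wedge\nabla w$ weakly, the nonlinear constraint passes to the limit and $(u,v)$ is a minimizer.

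For Step 2 I would follow the algebra of Theorem \ref{teor3d} verbatim. As the value of $c$ is immaterial, the minimizer obeys the optimality system for the augmented functional $\int_\Omega(\frac12|\nabla w||\nabla u|^2+\frac12|\nabla w||\nabla v|^2-\nabla u\cdot(\nabla v\wedge\nabla w))\,d\bbx$ with no boundary condition; testing its two Euler--Lagrange equations against $u$ and $v$ respectively shows that this functional vanishes at $(u,v)$. The crux is then the pointwise identity
$$
\frac12|\nabla w||\nabla u|^2+\frac12|\nabla w||\nabla v|^2-\nabla u\cdot(\nabla v\wedge\nabla w)=\frac{1}{2|\nabla w|}\big|\,|\nabla w|\nabla u-\nabla v\wedge\nabla w\,\big|^2+\frac{1}{2|\nabla w|}(\nabla v\cdot\nabla w)^2,
$$
valid because $|\nabla v\wedge\nabla w|^2=|\nabla v|^2|\nabla w|^2-(\nabla v\cdot\nabla w)^2$; both right-hand terms are nonnegative since $|\nabla w|>0$. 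As the integral is zero, each vanishes a.e., giving $|\nabla w|\nabla u=\nabla v\wedge\nabla w$ and $\nabla v\cdot\nabla w=0$. Feeding these into \eqref{igualdady} with $\bw=\bu=\nabla w$, $\bv=\nabla v$ yields $\nabla w\wedge\nabla u=|\nabla w|^{-1}\nabla w\wedge(\nabla v\wedge\nabla w)=|\nabla w|^{-1}(|\nabla w|^2\nabla v)=|\nabla w|\nabla v$, the second identity. Finally, the two weak Euler--Lagrange equations together with the Piola identities $\dv(\nabla v\wedge\nabla w)=\dv(\nabla w\wedge\nabla u)=0$ give $\dv(|\nabla w|\nabla u)=0$ and $\dv(|\nabla w|\nabla v)=0$, so both $u$ and $v$ solve the advertised conductivity equation.

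The main obstacle, exactly as in Section \ref{seis}, is the strong $L^2$ convergence of $\nabla U_j$ required to pass to the limit in the nonlinear constraint; once that div-curl input is granted, everything else is the algebraic bookkeeping above, whose only new feature is the observation that the symmetric choice $\gamma=|\nabla w|$ forces the product of the two quadratic weights to equal $|\nabla w|^2$ and thereby collapses the two Euler--Lagrange equations into a single one.
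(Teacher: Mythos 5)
Your proof is correct and follows precisely the route the paper intends: the corollary is stated there without an explicit proof, only with the indication that it combines the schemes of Theorems \ref{armnoarm} and \ref{final}, and your symmetric choice $\gamma=|\nabla w|$, the completed-square identity with weight $1/(2|\nabla w|)$, and the div-curl upgrade to strong convergence are exactly the ``suitable, minor changes'' the paper alludes to. The algebra (including the use of \eqref{igualdady} with $\nabla v\cdot\nabla w=0$ and the Piola identities to obtain both conductivity equations) checks out.
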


\end{document}